\newcommand{\R}{\mathbb{R}}
\newcommand{\F}{\mathcal{F}}
\newcommand{\N}{\mathbb{N}}
\theoremstyle{definition}
\newtheorem{definition}{Definition} \numberwithin{definition}{section}
\newtheorem{remark}[definition]{Remark}
\newtheorem{notation}[definition]{Notation}
\newtheorem{sassumption}[definition]{Standing assumption}
\theoremstyle{plain}
\newtheorem{theorem}[definition]{Theorem}
\newtheorem{lemma}[definition]{Lemma}
\newtheorem{proposition}[definition]{Proposition}
\newtheorem{corollary}[definition]{Corollary}
\title{A note on asymptotic exponential arbitrage with exponentially decaying failure probability}
\author{Kai Du\footnotemark[1] \and Ariel David Neufeld\footnotemark[1]}
\begin{document}
\maketitle
\pagenumbering{Roman}
\pagenumbering{arabic}

\begin{abstract}
The goal of this paper is to prove a result conjectured in F{\"o}llmer and
Schachermayer \cite{MR2403770}, even in slightly more general form. Suppose
that $S$ is a continuous semimartingale and satisfies a large deviations
estimate; this is a particular growth condition on the mean-variance tradeoff
process of $S$. We show that $S$ then allows asymptotic exponential arbitrage
with exponentially decaying failure probability, which is a strong and
quantitative form of long-term arbitrage. In contrast to F{\"o}llmer and
Schachermayer \cite{MR2403770}, our result does not assume that $S$ is a
diffusion, nor does it need any ergodicity assumption. \bigskip

\noindent\textbf{Keywords:} Asymptotic exponential arbitrage; large
deviations; continuous semimartingale model.\smallskip

\noindent\textbf{AMS 2010 Subject Classifications:} 91G10, 60F10, 60G44.

\end{abstract}

\footnotetext[1]{Department of Mathematics, ETH Z\"{u}rich, 8092 Z\"{u}rich,
Switzerland. Email addresses: \texttt{kai.du@math.ethz.ch} (K. Du),
\texttt{ariel.neufeld@math.ethz.ch} (A. D. Neufeld).}
\section{Introduction}
Let $(\Omega, \F,\mathbb{F},P)$ be a filtered probability space where the filtration $\mathbb{F}=(\F_t)_{t\geq 0}$
satisfies the usual conditions and let the price process $S=(S_t)_{t\geq 0}$ initially be any $\R^d$-valued semimartingale.
We define for each $T>0$ the set
\begin{equation*}
\mathbf{K}^T:= \bigg\{ \int_0^T H_s\, dS_s \ \bigg| \ H \in L(S) \
\mbox{admissible, i.e.} \ \int H\, dS \geq -a  \ \ \mbox{for some} \ a \in
\R_+ \bigg\}.
\end{equation*}
The following form of a long-term arbitrage was considered for the first time in F{\"o}llmer and Schachermayer \cite{MR2403770}; its name is taken from Mbele Bidima and R\'asonyi \cite{MbeleR}.
\begin{definition}
The process $S=(S_t)_{t\geq 0}$ \textit{allows asymptotic exponential arbitrage with exponentially decaying failure probability} if there exist $0<\tilde{T}<\infty$ and constants $C, \gamma_1, \gamma_2 >0 $ such that for all $T\geq \tilde{T}$, there is $X_T \in \mathbf{K}^T$ with\\
$a)$ $X_T \geq -e^{-\gamma_1 T} \ \ P$-a.s.\\
$b)$ $P[X_T \leq e^{\gamma_1 T}] \leq C e^{-\gamma_2 T}$ .
\end{definition}

If $S$ has that property, we can find for any large enough maturity $T$, up to an exponentially
(in $T$) small probability of failure, an exponentially (in $T$) large profit with an exponentially
(in $T$) small potential loss. This gives an explicit relation between any tolerance level of failure and the necessary time
to reach a high level. Furthermore, when $T \to \infty$, we get in the limit a riskless profit. Thus, asymptotic exponential arbitrage with exponentially decaying failure probability can be interpreted as a strong and quantitative form of long-term arbitrage.

We define the sets
\begin{equation*}
\mathbf{M}^{T,e}_m :=\Big\{ Q \ \mbox{p.m. on} \ \F_T \ \Big| \ Q \approx
P|_{\F_T} \ \mbox{and} \ (S_t)_{0\leq t \leq T} \ \mbox{ is a local
$Q$-martingale} \Big\}.
\end{equation*}
\begin{sassumption}\label{assAEAgenc}
Throughout this paper, we assume that $\mathbf{M}^{T,e}_m \neq \emptyset$ for
any $0<T<\infty$ and that the filtration $\mathbb{F}$ is \textit{continuous},
i.e. every local martingale with respect to $\mathbb{F}$ is continuous.
\end{sassumption}
We show below that under Assumption \ref{assAEAgenc}, any semimartingale in $\mathbb{F}$ is in fact continuous. Moreover, using a result of Schweizer \cite{Schweizer1995},
 we show in Lemma \ref{AEAchardensgenc} that there exists a predictable, sufficiently integrable $\R^d$-valued process $\lambda=(\lambda_t)_{t\geq 0}$ such that for any $T<\infty$ and any $Q \in \mathbf{M}^{T,e}_m$, the density process $Z^Q=(Z^Q_t)_{0\leq t \leq T}$ of $Q$ with respect to $P|_{\F_T}$ is of the form
\begin{equation*}
Z^Q= Z_0^Q \,\mathcal{E}\bigg(\int -\lambda \, dM + N^Q\bigg)=: Z_0^Q \, \mathcal{E}\big(L^Q\big) \ \ \mbox{on} \ [\![0,T]\!] ,
\end{equation*}
where $N^Q=(N^Q_t)_{0\leq t \leq T}$ is a continuous local martingale with $N^Q \bot M^T$ and $M^T$ is
the continuous local martingale coming from the canonical decomposition of $S^T$.
We call $\lambda$ a \textit{market price of risk} for the price process $S$.

Following F{\"o}llmer and Schachermayer \cite{MR2403770}, we extend the notion of $S$ satisfying a large deviations estimate.
\begin{definition}\label{large6}
A market price of risk $\lambda=(\lambda_t)_{t\geq 0}$ for the price process $S=(S_t)_{t\geq 0}$ \textit{satisfies a large deviations estimate} if there exist constants $c_1,c_2 >0$ such that
\begin{equation} \label{large6a}
\limsup_{T\to \infty} \frac{1}{T} \log P\bigg[\frac{1}{T} \int_0^T
\lambda^{\mathrm{tr}}_s \, d\langle M \rangle_s \, \lambda_s \leq c_1
\bigg]<-c_2 .
\end{equation}
\\
\end{definition}
The main goal of this paper is to prove that under Assumption \ref{assAEAgenc}, if a market price of risk for the price process $S$ satisfies a large deviations estimate, then $S$ allows asymptotic exponential arbitrage with exponentially decaying failure probability.

In F{\"o}llmer and Schachermayer \cite{MR2403770}, the authors considered an $\R^d$-valued diffusion process
$\tilde{S}=(\tilde{S}_t)_{t\geq 0}$ defined over a filtered probability space
$(\tilde{\Omega}, \tilde{\F},\tilde{\mathbb{F}},\tilde{P})$, where the filtration
$\tilde{\mathbb{F}}=(\tilde{\F}_t)_{t\geq 0}$ is the $\tilde{P}$-augmentation of the raw filtration generated by an
$\R^N$-valued Brownian motion $\tilde{W}$ and $\tilde{S}$ is of the form
\begin{equation}\label{foschaN6}
d\tilde{S}_t=\sigma(\tilde{S}_t) \big(d\tilde{W}_t + \varphi(\tilde{S}_t)\, dt\big) .
\end{equation}
In (\ref{foschaN6}), $\sigma: \R^d \rightarrow \R^{d\times N}$ and $\varphi: \R^d \rightarrow \R^{N}$
are such that $\varphi(\tilde{S}_t) \in (\ker(\sigma(\tilde{S}_t))^{\bot}$ for any $t \geq 0$ and
the process $\tilde{Z}=(\tilde{Z}_t)_{t\geq 0}$ defined by
\begin{equation}\label{AEAfoschamartass}
\tilde{Z}_t:= \mathcal{E}\bigg(-\int \varphi(\tilde{S}) \, d\tilde{W}\bigg)_t = \exp \bigg(-\int_0^t \varphi(\tilde{S}_s)\, d\tilde{W}_s - \frac{1}{2} \int_0^t \Vert \varphi(\tilde{S}_s) \Vert^2\, ds \bigg)
\end{equation}
is a strictly positive $\tilde{P}$-martingale, where $\Vert \cdot \Vert$ denotes the Euclidean norm on $\R^N$.
%
\begin{definition}\label{large6diff}
The market price of risk function $\varphi(\cdot)$ for the price process $\tilde{S}$ \textit{satisfies a large deviations estimate with respect to} $\tilde{S}$ if there are constants $c_1,c_2 >0$ such that
\begin{equation}\label{large6aa}
\limsup_{T\to \infty} \frac{1}{T} \log P\bigg[\frac{1}{T} \int_0^T \Vert \varphi(\tilde{S}_s) \Vert^2\, ds \leq c_1 \bigg]<-c_2 .
\end{equation}
\end{definition}
%

%

F{\"o}llmer and Schachermayer formulated in \cite{MR2403770} the conjecture that if (\ref{large6aa}) holds,
then $\tilde{S}$ allows asymptotic exponential arbitrage with exponentially decaying failure probability.
In Mbele Bidima and R\'asonyi \cite{MbeleR}, the authors proved such a result in a discrete-time version
of the model (\ref{foschaN6}) with bounded drift and volatility.
In the present paper, we can show, as a corollary of our main theorem,
that the conjecture is also true in the stated form for the continuous-time price process $\tilde{S}$ in (\ref{foschaN6}).
\section{Main theorem, its proof and comments}
We begin by showing
\begin{lemma}\label{AEAgencont}
Under Assumption \ref{assAEAgenc}, the process $S$ is continuous.
\end{lemma}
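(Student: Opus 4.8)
The plan is to fix a finite horizon $T$, transfer the continuity of the filtration from $P$ to an equivalent local martingale measure $Q$, conclude that $S$ is continuous on $[0,T]$, and then let $T$ range over $(0,\infty)$. First I would fix $0<T<\infty$ and, by Assumption~\ref{assAEAgenc}, pick $Q\in\mathbf{M}^{T,e}_m$. Let $Z=(Z_t)_{0\le t\le T}$ be the density process of $Q$ with respect to $P|_{\F_T}$, i.e. $Z_t=\E_P[\,dQ/dP\mid\F_t\,]$. Being a uniformly integrable $P$-martingale, $Z$ is in particular a $P$-local martingale, so the continuity of $\mathbb{F}$ forces $Z$ to be continuous; and since $Q\approx P$ the process $Z$ is strictly positive, hence on the compact interval $[0,T]$ it is bounded away from $0$.

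The key step is to show that every $Q$-local martingale on $[0,T]$ is continuous. Let $M$ be such a process. By the standard characterization of local martingales under an equivalent change of measure (Girsanov--Lenglart), $M$ is a $Q$-local martingale if and only if the product $MZ$ is a $P$-local martingale; thus $MZ$ is a $P$-local martingale, hence continuous by the continuity of $\mathbb{F}$. Since $Z$ is continuous and bounded away from $0$ on $[0,T]$, the quotient $M=(MZ)/Z$ is continuous there as well. Applying this with $M$ ranging over the coordinates of $(S_t)_{0\le t\le T}$---which is a local $Q$-martingale by the very definition of $\mathbf{M}^{T,e}_m$---we obtain that $S$ is continuous on $[0,T]$. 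As $0<T<\infty$ was arbitrary, $S$ is continuous on $[0,\infty)$.

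The only point that will require care is the passage between $Q$-local martingales and $P$-local martingales at the level of localizing sequences, together with the accompanying classical facts that the density process of an equivalent measure is strictly positive (so, being continuous, it is bounded below on compacts) and that the semimartingale structure of $S$ is preserved under the equivalent change of measure; none of these constitutes a genuine obstacle. It is also worth noting that $S$ being a priori only a c\`adl\`ag $\R^d$-valued semimartingale causes no trouble: the argument above is applied componentwise, and ``continuous on $[0,T]$ for every finite $T$'' patches together at once to ``continuous on $[0,\infty)$''.
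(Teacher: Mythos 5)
Your proof is correct and takes essentially the same approach as the paper: both rest on the facts that the density process $Z$ and the product $ZS$ are $P$-local martingales (hence continuous by the continuity of $\mathbb{F}$), that $Z$ is strictly positive so $1/Z$ is continuous, and that $S=(ZS)/Z$. The paper routes this through a semimartingale decomposition $S=S_0+M+A$ and the identity $A=\tfrac{1}{Z}ZS-M-S_0$, but the underlying computation is the same as yours.
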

\begin{proof}
This is well known, but we give a proof for completeness. Take any
decomposition $S = S_0 + M + A$ with a local martingale $M$ and with $A$ of
finite variation. Take any $T<\infty$ and any $Q \in \mathbf{M}^{T,e}_m$ with
density process $Z=(Z_t)_{0\leq t \leq T}$ with respect to $P|_{\F_T}$. As
$\mathbb{F}$ is continuous, we obtain that the local $P$-martingales $Z$,
$ZS$ and $M$ and hence also $\frac{1}{Z}$ are continuous processes up to time
$T$. So $A=\frac{1}{Z} ZS - M - S_0$ and hence also $S$ are continuous up to
time $T$, which gives the result.
\end{proof}
\begin{notation} Under Assumption \ref{assAEAgenc}, we let
\begin{equation*}
S=S_0 + M + A
\end{equation*}
be the canonical decomposition of the continuous semimartingale $S$, where $M$ is a continuous local martingale and $A$ is a continuous process of finite variation.
\end{notation}
We next characterize for any $Q \in \mathbf{M}^{T,e}_m$ the structure of its
density process $Z^Q$ with respect to $P|_{\F_T}$. For unexplained notations
from martingale theory, we refer to Jacod and Shiryaev \cite{MR1943877}.
\begin{lemma}\label{AEAchardensgenc}
Under Assumption \ref{assAEAgenc}, there exists an $\R^d$-valued stochastic
process $\lambda=(\lambda_t)_{t\geq 0} \in L^2_{\mathrm{loc}}(M)$ such that
for any $T<\infty$ and any $Q \in \mathbf{M}^{T,e}_m$, the density process
$Z^Q=(Z^Q_t)_{0\leq t \leq T}$ of $Q$ with respect to $P|_{\F_T}$ is of the
form
\begin{equation}\label{maegeaea1}
Z^Q= Z_0^Q \, \mathcal{E}\bigg(\int -\lambda\, dM + N^Q\bigg)=: Z_0^Q \, \mathcal{E}\big(L^Q\big) \ \ \mbox{on} \ [\![0,T]\!] ,
\end{equation}
where $N^Q= (N^Q_t)_{0\leq t \leq T}$ is a continuous local martingale with $N^Q \bot M^T$. As a consequence, we have
\begin{equation}\label{maegeaea2}
\big\langle L^Q \big\rangle_t \geq \int_0^t \lambda^{\mathrm{tr}}_s \,
d\langle M\rangle_s \, \lambda_s
\end{equation}
for each $t \in [0,T]$. We call $\lambda$ a \emph{market price of risk} for the price process $S$.
\end{lemma}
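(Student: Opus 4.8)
The plan is to derive from Assumption \ref{assAEAgenc} the \emph{structure condition} for $S$ and then to read off the form of every $Z^Q$ by Girsanov's theorem together with the Galtchouk--Kunita--Watanabe (GKW) decomposition of local martingales along $M$. Fix $T<\infty$. Since $\mathbf{M}^{T,e}_m\neq\emptyset$, pick any $Q_0\in\mathbf{M}^{T,e}_m$; its density process is strictly positive, hence (by continuity of $\mathbb{F}$) continuous, so $Z^{Q_0}=Z_0^{Q_0}\mathcal{E}(L^{Q_0})$ with $L^{Q_0}$ a continuous local martingale null at $0$. GKW-decompose $L^{Q_0}=\int\beta\,dM+N$ with $\beta\in L^2_{\mathrm{loc}}(M)$ predictable and $N$ a continuous local martingale with $N\bot M$; applying Girsanov shows that $M-\langle M,L^{Q_0}\rangle=M-\int d\langle M\rangle\,\beta$ is a local $Q_0$-martingale on $[0,T]$. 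Since $S^T=S_0+M^T+A^T$ is also a local $Q_0$-martingale, the process $A+\int d\langle M\rangle\,\beta$ is a continuous $Q_0$-local martingale of finite variation null at $0$ and hence vanishes on $[\![0,T]\!]$. Thus the structure condition $A=\int d\langle M\rangle\,\lambda$ holds on $[\![0,T]\!]$ with $\lambda:=-\beta\in L^2_{\mathrm{loc}}(M)$ --- this is the content of the cited result of Schweizer \cite{Schweizer1995}. Carrying this out for each $T\in\N$, and using that $A$ and $\langle M\rangle$ do not depend on $T$ together with the uniqueness discussed below, a routine pasting argument produces a single predictable $\lambda\in L^2_{\mathrm{loc}}(M)$ on $[0,\infty)$ with $A=\int d\langle M\rangle\,\lambda$.

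Now let $T<\infty$ and $Q\in\mathbf{M}^{T,e}_m$ be arbitrary. As above, $Z^Q=Z_0^Q\mathcal{E}(L^Q)$ with $L^Q$ a continuous local martingale null at $0$, and the GKW decomposition gives $L^Q=\int\beta^Q\,dM+N^Q$ with $\beta^Q\in L^2_{\mathrm{loc}}(M)$ predictable and $N^Q$ a continuous local martingale with $N^Q\bot M$, hence $N^Q\bot M^T$. Girsanov's theorem together with the vanishing of continuous $Q$-local martingales of finite variation null at $0$ again yields $A=-\langle M,L^Q\rangle=-\int d\langle M\rangle\,\beta^Q$ on $[\![0,T]\!]$, so that $\int_0^\cdot d\langle M\rangle_s(\lambda_s+\beta^Q_s)\equiv0$ on $[0,T]$. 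Writing $d\langle M\rangle_s=c_s\,dB_s$ with $B$ a scalar increasing process and $c_s$ predictable nonnegative-definite matrices, this forces $c_s(\lambda_s+\beta^Q_s)=0$ $(dB\otimes dP)$-a.e.\ on $[0,T]$, hence $(\lambda_s+\beta^Q_s)^{\mathrm{tr}}c_s(\lambda_s+\beta^Q_s)=0$ a.e.\ and therefore $\int_0^\cdot(\lambda_s+\beta^Q_s)^{\mathrm{tr}}\,d\langle M\rangle_s\,(\lambda_s+\beta^Q_s)\equiv0$ on $[0,T]$; consequently $\int(\lambda+\beta^Q)\,dM$ is a continuous local martingale null at $0$ with vanishing quadratic variation, hence indistinguishable from $0$. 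This gives $\beta^Q=-\lambda$ and $L^Q=\int-\lambda\,dM+N^Q$ on $[\![0,T]\!]$, which is (\ref{maegeaea1}). Finally, (\ref{maegeaea2}) is immediate: since $N^Q\bot M$ the cross term vanishes, so $\langle L^Q\rangle=\langle\int-\lambda\,dM\rangle+\langle N^Q\rangle=\int_0^\cdot\lambda^{\mathrm{tr}}_s\,d\langle M\rangle_s\,\lambda_s+\langle N^Q\rangle\geq\int_0^\cdot\lambda^{\mathrm{tr}}_s\,d\langle M\rangle_s\,\lambda_s$, because $\langle N^Q\rangle$ is increasing and null at $0$.

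The one genuinely delicate point, which I regard as the main obstacle, is the identification $\beta^Q=-\lambda$ \emph{uniformly in $Q$}: the whole argument hinges on the uniqueness, modulo $d\langle M\rangle$-null sets, of the solution of the structure equation $A=\int d\langle M\rangle\,\lambda$, which in turn rests on the elementary but slightly fiddly fact that $\int_0^\cdot d\langle M\rangle_s\,h_s\equiv0$ implies $\int_0^\cdot h^{\mathrm{tr}}_s\,d\langle M\rangle_s\,h_s\equiv0$ for predictable $\R^d$-valued $h$. Secondary technical points are the pasting needed to make $\lambda$ genuinely global (independent of $T$), and checking that the integrability automatically supplied by the GKW decomposition is precisely the membership $\lambda\in L^2_{\mathrm{loc}}(M)$ asserted in the statement.
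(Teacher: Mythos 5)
Your proof is correct, but it takes a genuinely different route from the paper's. The paper treats the key fact as a black box: it invokes Theorem~1 of Schweizer \cite{Schweizer1995} to obtain, for each $T$, a process $\lambda^{(T)}\in L^2_{\mathrm{loc}}(M^T)$ such that every $Q\in\mathbf{M}^{T,e}_m$ has density process $Z_0^Q\,\mathcal{E}(\int-\lambda^{(T)}dM^T+N^Q)$ with $N^Q\bot M^T$, and then pastes the $\lambda^{(T)}$ together on the grid $\N$, using that $Q|_{\F_{n-1}}\in\mathbf{M}^{n-1,e}_m$ and that the stochastic integral $\int\lambda^{(T)}dM^T$ in Schweizer's representation is determined (so the pieces agree on overlaps). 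You instead re-derive the content of Schweizer's theorem from first principles: writing $Z^Q=Z^Q_0\mathcal{E}(L^Q)$ (continuity of $\mathbb F$ plus strict positivity), GKW-decomposing $L^Q=\int\beta^Q\,dM+N^Q$, applying Girsanov to get $A=-\langle M,L^Q\rangle$ on $[\![0,T]\!]$, and deducing the structure condition $A=\int d\langle M\rangle\,\lambda$ together with its uniqueness (as a stochastic integral) via the elementary $c_sh_s=0\Rightarrow h_s^{\mathrm{tr}}c_sh_s=0$ argument. Your pasting then goes through the global uniqueness of the structure condition, rather than the paper's restriction-of-measures argument; the two mechanisms are essentially equivalent (both rest on GKW uniqueness), so this buys nothing new, but your version is more self-contained and makes explicit why the paper's pasting is consistent. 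Two small imprecisions worth tidying: the GKW decomposition on $[\![0,T]\!]$ gives $\beta^Q\in L^2_{\mathrm{loc}}(M^T)$ and $N^Q\bot M^T$ (not $\bot M$, since $N^Q$ is not defined beyond $T$), and the conclusion of the uniqueness step is $\int\beta^Q\,dM=-\int\lambda\,dM$ rather than $\beta^Q=-\lambda$ pointwise; both are cosmetic and do not affect the argument, and you already flag the latter as holding only modulo $d\langle M\rangle$-null sets.
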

\begin{proof}
By Lemma \ref{AEAgencont}, the price process $S$ is continuous. Since
$\mathbf{M}^{T,e}_m \neq \emptyset$ for any $T<\infty$, Theorem 1 in
Schweizer \cite{Schweizer1995} gives for any $T<\infty$ an $\R^d$-valued
process $\lambda^{(T)}=(\lambda^{(T)}_t)_{0\leq t \leq T} \in
L^2_{\mathrm{loc}}(M^T)$ such that for any $Q \in \mathbf{M}^{T,e}_m$, the
density process $Z^Q=(Z^Q_t)_{0\leq t \leq T}$ of $Q$ with respect to
$P|_{\F_T}$ is of the form
\begin{equation}\label{AEAchardensl0genc}
Z^Q= Z_0^Q \, \mathcal{E}\bigg(\int -\lambda^{(T)}\, dM^T + N^Q\bigg) \ \ \mbox{on} \ [\![0,T]\!] ,
\end{equation}
where $N^Q=(N^Q_t)_{0\leq t \leq T}$ is a continuous local martingale with
$N^Q \bot M^T$. We point out that the process $\lambda^{(T)}$ need not be
unique. However, the stochastic integral $\int \lambda^{(T)} dM^T$ does not
depend on the choice of $\lambda^{(T)}$ satisfying (\ref{AEAchardensl0genc});
see Schweizer \cite{Schweizer1995}. Extending $\lambda^{(T)}$ to $[0,\infty)$
by setting $\bar{\lambda}^{(T)}= \lambda^{(T)} 1_{[[0,T]]}$, we clearly have
$\bar{\lambda}^{(T)} \in L^2_{\mathrm{loc}}(M)$. The $\R^d$-valued process
$\lambda=(\lambda_t)_{t\geq 0}$ defined by
\begin{equation}\label{AEAkleben}
\lambda := \sum_{n=1}^{\infty} \bar{\lambda}^{(n)}\, 1_{((n-1,n]]}
\end{equation}
 is then in $L^2_{\mathrm{loc}}(M)$, too. Moreover, $Q |_{\F_{n-1}} \in \mathbf{M}^{n-1,e}_m$ for any $Q \in \mathbf{M}^{n,e}_m$,
and so (\ref{AEAchardensl0genc}) yields inductively that
\begin{equation*}
\int \lambda\, dM = \int \bar{\lambda}^{(n)}\, dM = \int \bar{\lambda}^{(n)}\, dM^{n} \ \ \mbox{on} \ [\![0,n]\!]
\end{equation*}
for any $n \in \N$. So (\ref{maegeaea1}) follows from (\ref{AEAchardensl0genc}) and (\ref{AEAkleben}).
%

Finally, $L^Q = - \int \lambda dM + N^Q$ on $[\![0,T]\!]$ and $N^Q \bot M^T$ imply (\ref{maegeaea2}) because
\begin{equation*}
\big\langle L^Q \big\rangle = \int \lambda^{\mathrm{tr}}\, d\langle M
\rangle\, \lambda + \langle N^Q\rangle .
\end{equation*}
\end{proof}
\begin{remark} We do not claim that the market price of risk $\lambda$ for the price process $S$ is unique.
However, as already used, the stochastic integral $\int \lambda dM$ does not
depend on the choice of $\lambda$. This can for instance be seen by writing
for $Q \in \mathbf{M}^{T,e}_m$ the density process $Z^Q= Z^Q_0
\mathcal{E}(L^Q)$ and then
 arguing that $-\int \lambda dM$ must be the projection of $L^Q$ on $M$; this follows because $Z^Q S$ is a local $P$-martingale.
 As a consequence, the property of satisfying a large deviations estimate does not depend on
 the choice of the market price of risk $\lambda$ either.
\end{remark}
\begin{notation} For brevity, we introduce the so-called \textit{mean-variance tradeoff process}
\begin{equation*}
K_t:= \int_0^t \lambda^{\mathrm{tr}}_s\, d\langle M \rangle_s\, \lambda_s
\end{equation*}
for $t\geq 0$. This process is finite-valued since $\lambda \in
L^2_{\mathrm{loc}}(M)$, and it does not depend on
 the choice of the market price of risk $\lambda$; in fact $K=\langle \int \lambda\, dM \rangle .$
\end{notation}
\begin{lemma}\label{AEAkail0}
Under Assumption \ref{assAEAgenc}, suppose that a market price of risk $\lambda$ for the price process $S$ satisfies a large deviations estimate. Then
\begin{equation*}
K_{\infty} := \lim\limits_{t \to \infty} \  K_t = \infty \ \ P\mbox{-a.s.}
\end{equation*}
\end{lemma}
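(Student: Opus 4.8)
The plan is to combine the monotonicity of $K$ with a Borel--Cantelli argument applied to the large deviations estimate along integer times. First I would record that, since $\langle M\rangle$ is a matrix-valued nondecreasing process, the integrand $\lambda_s^{\mathrm{tr}}\,d\langle M\rangle_s\,\lambda_s$ defines a nonnegative measure, so $t\mapsto K_t$ is nondecreasing; as $K$ is moreover finite-valued (because $\lambda\in L^2_{\mathrm{loc}}(M)$), the limit $K_\infty=\lim_{t\to\infty}K_t$ exists in $[0,\infty]$, and it suffices to show $K_n\to\infty$ $P$-a.s.\ along $n\in\N$.

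Next I would rewrite \eqref{large6a} in terms of $K$: since $\frac1T\int_0^T\lambda^{\mathrm{tr}}_s\,d\langle M\rangle_s\,\lambda_s=\frac1T K_T$, the estimate says precisely that $\limsup_{T\to\infty}\frac1T\log P[K_T\le c_1 T]<-c_2$. From this strict inequality one extracts a $T_0<\infty$ such that $P[K_T\le c_1 T]\le e^{-c_2 T}$ for all $T\ge T_0$ (indeed $\limsup_T a_T<-c_2$ forces $a_T<-c_2$ for all large $T$, where $a_T:=\frac1T\log P[K_T\le c_1 T]$). In particular $\sum_{n=1}^\infty P[K_n\le c_1 n]\le T_0+\sum_{n\ge T_0}e^{-c_2 n}<\infty$, so by the Borel--Cantelli lemma, for $P$-a.e.\ $\omega$ there is $N(\omega)$ with $K_n(\omega)>c_1 n$ for all $n\ge N(\omega)$.

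Finally, combining this with monotonicity of $K$: for $P$-a.e.\ $\omega$,
\[
K_\infty(\omega)=\lim_{t\to\infty}K_t(\omega)\ \ge\ \lim_{n\to\infty}K_n(\omega)\ \ge\ \lim_{n\to\infty}c_1 n\ =\ \infty,
\]
which is the claim. The argument is short; the only point requiring a little care is passing from the $\limsup$ condition to a genuine summable exponential bound along the integers, so that Borel--Cantelli applies. Note also that, by the remarks preceding the statement, neither $K$ nor the validity of the large deviations estimate depends on the particular choice of the market price of risk $\lambda$, so the conclusion is unambiguous.
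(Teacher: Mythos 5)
Your proof is correct. You use the Borel--Cantelli lemma along integer times: the large deviations estimate yields $P[K_n\le c_1 n]\le e^{-c_2 n}$ for all large $n$, so the series is summable, and Borel--Cantelli gives $K_n>c_1 n$ eventually a.s.; monotonicity of $K$ then forces $K_\infty=\infty$. The paper instead argues by contradiction and avoids Borel--Cantelli entirely: if $K_\infty\le C$ on a set $B$ of positive probability, one picks a \emph{single} time $\bar T$ large enough that simultaneously $C\le c_1\bar T$ (so, by monotonicity, $B\subseteq\{K_{\bar T}\le c_1\bar T\}$), $P[K_{\bar T}\le c_1\bar T]\le e^{-c_2\bar T/2}$, and $e^{-c_2\bar T/2}<P[B]$; chaining these gives $P[B]<P[B]$. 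The paper's argument is a one-shot pigeonhole requiring no summation; yours is direct (no contradiction) and establishes the slightly stronger statement that $K_n/n$ is eventually bounded below by $c_1$ almost surely. Both extract the same elementary consequence from the $\limsup$ hypothesis, namely an exponential tail bound for all large $T$, and both are equally valid here.

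One small point of presentation: you write that $\limsup_T a_T<-c_2$ forces $a_T<-c_2$ for all large $T$, which is correct, but it is worth being explicit (as the paper implicitly is with its $c_2/2$) that this is because $\limsup_T a_T=\inf_{T_0}\sup_{T\ge T_0}a_T$, so a strict inequality for the infimum already gives a strict bound for the whole tail $\sup_{T\ge T_0}a_T$.
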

\begin{proof}
If the above statement is not true, there is a constant $C>0$ with
\begin{equation*}
P\big[ K_{\infty} \leq C \big] =: P[B]>0.
\end{equation*}
As $\lambda$ satisfies a large deviations estimate, there exist constants $c_1,c_2 >0$ such that
\begin{equation*}
\limsup_{T\to \infty} \frac{1}{T} \log P\bigg[\frac{1}{T} \ K_T \leq c_1 \bigg]
 =: \limsup_{T\to \infty} \frac{1}{T} \log P[V_T] < -c_2 .
\end{equation*}
Thus, we can find $0<\bar{T}<\infty$ such that
\begin{equation*}
C \leq c_1 \bar{T} , \ \ \ \ \ \ P[V_{\bar{T}}] \leq e^{-\frac{c_2}{2} \bar{T}} , \ \ \ \ \ \ e^{-\frac{c_2}{2} \bar{T}}<P[B].
\end{equation*}
As $C \leq c_1 \bar{T}$ and $K$ is increasing, we get $B\subseteq V_{\bar{T}}$. But then, by the definition of $\bar{T}$,
\begin{equation*}
P[B]\leq P[V_{\bar{T}}] \leq e^{-\frac{c_2}{2} \bar{T}} < P[B]
\end{equation*}
which gives a contradiction.
\end{proof}
\begin{lemma}\label{AEAkail1}
Under Assumption \ref{assAEAgenc}, suppose that a market price of risk $\lambda$ for the price process $S$ satisfies
 a large deviations estimate. Fix $0<T<\infty$ and let $L=(L_t)_{0\leq t \leq T}$ be a continuous local martingale with $L_0=0$.
Then there exists a continuous local martingale $\bar{L}=(\bar{L}_t)_{t\geq 0}$ such that $\bar{L}_t =L_t$ for any $t \in [0,T]$ and
\begin{equation*}
\big\langle \bar{L} \big\rangle_{\infty} :=\lim\limits_{t \to \infty} \big\langle \bar{L} \big\rangle_{t}= \infty \ \ \ P\mbox{-a.s.}
\end{equation*}
 \end{lemma}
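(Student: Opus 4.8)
The plan is to extend $L$ past time $T$ by adjoining to it, after time $T$, a suitable stochastic integral against $M$, so that the quadratic variation accumulated on $(T,\infty)$ equals $K_{\infty}-K_T$, which is infinite $P$-a.s.\ by Lemma~\ref{AEAkail0}. First I would extend $L$ to $[0,\infty)$ by freezing it after $T$, i.e.\ set $L_t:=L_T$ for $t\ge T$. Since $L$ is a continuous local martingale on $[0,T]$ with $L_0=0$, one checks that this extended process is a continuous local martingale on $[0,\infty)$ with respect to $(\F_t)_{t\ge 0}$: a localizing sequence is $\sigma_n:=\inf\{t\in[0,T]:|L_t|\ge n\}$ (with $\inf\emptyset:=\infty$), which increases to $\infty$ $P$-a.s.\ because $\sup_{0\le t\le T}|L_t|<\infty$ $P$-a.s. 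Next, since $\lambda\in L^2_{\mathrm{loc}}(M)$, the predictable process $\mathbf{1}_{(T,\infty)}\lambda$ also lies in $L^2_{\mathrm{loc}}(M)$, so $I:=\int_0^{\cdot}\mathbf{1}_{(T,\infty)}(s)\,\lambda_s\,dM_s$ is a continuous local martingale on $[0,\infty)$ that vanishes on $[0,T]$.

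I would then set $\bar L:=L-I$, which is a continuous local martingale on $[0,\infty)$ satisfying $\bar L_t=L_t$ for all $t\in[0,T]$. It remains to compute $\langle\bar L\rangle_{\infty}$. Because the extended process $L$ is constant on $[T,\infty)$, its bracket $\langle L\rangle$ is constant there, and by the Kunita--Watanabe inequality so is each $\langle L,M^i\rangle$; hence $\langle L,I\rangle=\int_0^{\cdot}\mathbf{1}_{(T,\infty)}\lambda^{\mathrm{tr}}\,d\langle L,M\rangle\equiv 0$. Therefore
\begin{equation*}
\langle\bar L\rangle_t=\langle L\rangle_t+\langle I\rangle_t=\langle L\rangle_{t\wedge T}+\int_0^t\mathbf{1}_{(T,\infty)}(s)\,\lambda^{\mathrm{tr}}_s\,d\langle M\rangle_s\,\lambda_s=\langle L\rangle_{t\wedge T}+\big(K_t-K_{t\wedge T}\big),
\end{equation*}
so that $\langle\bar L\rangle_{\infty}=\langle L\rangle_T+K_{\infty}-K_T$. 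By Lemma~\ref{AEAkail0}, $K_{\infty}=\infty$ $P$-a.s., whereas $\langle L\rangle_T$ and $K_T$ are finite $P$-a.s.; consequently $\langle\bar L\rangle_{\infty}=\infty$ $P$-a.s., which is the desired conclusion.

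I expect the only genuinely delicate point to be the first step: verifying that freezing the finite-horizon local martingale $L$ after time $T$ produces a local martingale on the infinite horizon with respect to the full filtration $(\F_t)_{t\ge0}$, i.e.\ that the localizing sequence for $L$ on $[0,T]$ can be turned into one that is unbounded in time. Everything afterwards is a routine computation with quadratic variations, and the large deviations estimate enters only through the already established Lemma~\ref{AEAkail0}.
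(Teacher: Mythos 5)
Your proposal is correct and follows essentially the same route as the paper: both constructions extend $L$ past time $T$ by adjoining the increments of $\int\lambda\,dM$ on $(T,\infty)$ (the paper writes $\bar L = L\,1_{[[0,T]]} + \int_T^{\cdot}\lambda\,dM\,1_{[[T,\infty))}$, you freeze $L$ at $T$ and subtract $\int\mathbf{1}_{(T,\infty)}\lambda\,dM$ --- the sign is irrelevant for the bracket), and then invoke Lemma~\ref{AEAkail0} to obtain $\langle\bar L\rangle_\infty=\langle L\rangle_T+K_\infty-K_T=\infty$. Your writeup merely spells out two routine points the paper leaves implicit (that the frozen $L$ is still a local martingale on $[0,\infty)$, and that its cross-variation with the added integral vanishes), so the mathematical content is the same.
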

\begin{proof}
Define the process $Y=(Y_t)_{t\geq T}$ by $Y_t:= \int_T^t \lambda_s\, dM_s$, set
$\bar{Y}:= Y 1_{[[T,\infty))}$ and
\begin{equation*}
\bar{L}:= L\, 1_{[[0,T]]} + \bar{Y} = L\, 1_{[[0,T]]} + Y\, 1_{[[T,\infty))}.
\end{equation*}
Then $\bar{L}$ is a continuous local martingale null at 0 like
 $L$, $Y$ and $\bar{Y}$, and we have $\bar{L}=L$ on $[\![0,T]\!]$ by construction. Moreover,
\begin{equation*}
\langle \bar{L} \rangle_{\infty} = \langle L \rangle_T + \langle \bar{Y}
\rangle_{\infty} = \langle L \rangle_T + \int_T^\infty
\lambda^{\mathrm{tr}}_s\, d\langle M \rangle_s\, \lambda_s = \langle L
\rangle_T +K_{\infty} -K_T = \infty \ \ P\mbox{-a.s.}
\end{equation*}
due to Lemma \ref{AEAkail0}.
\end{proof}
\begin{remark} In Lemma \ref{AEAkail1}, we can replace Assumption \ref{assAEAgenc} and the condition on $\lambda$
by assuming instead that there exists a Brownian motion $B$ with respect to the filtration $\mathbb{F}$, which is a much weaker assumption.
Indeed, in that case, we just define in the above proof the process $Y$ by $Y_t:= B_t-B_T$ for $t\geq T$. The rest of the argument then works in the same way.
\end{remark}

Following F{\"o}llmer and Schachermayer \cite{MR2403770}, we now define the notion of $(\varepsilon_1,\varepsilon_2)$-arbitrage $($up to time $T)$.
\begin{definition}\label{e1e2}
Fix any $T<\infty$ and let $0<\varepsilon_1,\varepsilon_2<1$. The process $S$ \textit{admits an $(\varepsilon_1,\varepsilon_2)$-arbitrage up to time $T$} if there exists $X_T \in \mathbf{K}^T$ such that\\
$a)$ $X_T \geq -\varepsilon_2 \ \ P$-a.s.\\
$b)$ $P[X_T \geq 1-\varepsilon_2]\geq 1-\varepsilon_1$.
\end{definition}
Our next preliminary result is a direct consequence of Proposition 2.3 in F{\"o}llmer and Schachermayer \cite{MR2403770}.
More precisely, the result follows by the argument $(\mbox{ii}) \Rightarrow (\mbox{iii}) \Rightarrow (\mbox{i})$ in that proposition. See also Remark 2.4 in F{\"o}llmer and Schachermayer \cite{MR2403770}.
\begin{lemma}\label{trickaea}
Fix any $T<\infty$ and let $0<\varepsilon_1,\varepsilon_2<1$ be such that for
each $Q \in \mathbf{M}^{T,e}_m$, there is a set $A^Q_T \in \F_T$ with
$P[A^Q_T]\leq \varepsilon_1$ and $Q[A^Q_T]\geq 1-\varepsilon_2$. Then we have
for any $0<\tilde{\varepsilon}_1,\tilde{\varepsilon}_2<1$ with $2^{1+\alpha}
\max(\varepsilon_1,\varepsilon_2^{\alpha})\leq \tilde{\varepsilon}_1
\tilde{\varepsilon}_2^{\alpha}$ for some $0<\alpha<\infty$ that $S$ admits an
$(\tilde{\varepsilon}_1,\tilde{\varepsilon}_2)$-arbitrage up to time $T$.
\end{lemma}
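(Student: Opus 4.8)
The plan is to establish the claim by three maneuvers that together reconstruct the chain $(\mathrm{ii})\Rightarrow(\mathrm{iii})\Rightarrow(\mathrm{i})$ of \cite[Proposition~2.3]{MR2403770}: first I would turn the family $(A^Q_T)_{Q\in\mathbf{M}^{T,e}_m}$ of sets --- one per martingale measure --- into a \emph{single} contingent claim that is simultaneously cheap under every $Q\in\mathbf{M}^{T,e}_m$; then I would threshold that claim to recover a set; and finally I would superhedge the set to produce the required element of $\mathbf{K}^T$.

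For the uniformization step, set $\mathcal{F}:=\{f\in L^\infty(\F_T): 0\le f\le 1,\ E_P[f]\ge 1-\varepsilon_1\}$. By hypothesis, for each $Q$ the function $g_Q:=\mathbf{1}_{\Omega\setminus A^Q_T}$ lies in $\mathcal{F}$ and satisfies $E_Q[g_Q]=1-Q[A^Q_T]\le\varepsilon_2$. Since $\mathcal{F}$ is convex and weak-$*$ compact in $L^\infty(\F_T)=(L^1(P|_{\F_T}))^*$, $\mathbf{M}^{T,e}_m$ is convex, and $(f,Q)\mapsto E_Q[f]$ is affine in each variable and weak-$*$ continuous in $f$, Sion's minimax theorem yields
\[
\inf_{f\in\mathcal{F}}\ \sup_{Q\in\mathbf{M}^{T,e}_m}E_Q[f]\;=\;\sup_{Q\in\mathbf{M}^{T,e}_m}\ \inf_{f\in\mathcal{F}}E_Q[f]\;\le\;\sup_{Q\in\mathbf{M}^{T,e}_m}E_Q[g_Q]\;\le\;\varepsilon_2 .
\]
As $f\mapsto\sup_Q E_Q[f]$ is weak-$*$ lower semicontinuous on the compact set $\mathcal{F}$, the infimum on the left is attained at some $f^\star\in\mathcal{F}$; hence $0\le f^\star\le 1$, $E_P[f^\star]\ge 1-\varepsilon_1$, and $E_Q[f^\star]\le\varepsilon_2$ for every $Q\in\mathbf{M}^{T,e}_m$. (If one prefers to avoid the attainment argument, take $f^\star$ within an arbitrarily small slack of the infimum and carry that slack through the estimates below.)

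Next, fix a level $\theta\in(0,1)$ and put $C:=\{f^\star\ge\theta\}\in\F_T$. Two applications of Markov's inequality give $P[\Omega\setminus C]\le E_P[1-f^\star]/(1-\theta)\le\varepsilon_1/(1-\theta)$ and, uniformly in $Q$, $E_Q[\mathbf{1}_C]=Q[f^\star\ge\theta]\le E_Q[f^\star]/\theta\le\varepsilon_2/\theta$. Since $\mathbf{M}^{T,e}_m\neq\emptyset$, the superhedging duality (available in this framework precisely because $\mathbf{M}^{T,e}_m$ is nonempty, and being the mechanism behind the implications invoked from \cite[Proposition~2.3]{MR2403770}) provides an admissible $H\in L(S)$ with $\varepsilon_2/\theta+\int_0^T H_s\,dS_s\ge\mathbf{1}_C$ $P$-a.s.; so $X_T:=\int_0^T H_s\,dS_s\in\mathbf{K}^T$ obeys $X_T\ge-\varepsilon_2/\theta$ everywhere and $X_T\ge 1-\varepsilon_2/\theta$ on $C$, whence $P[X_T\ge 1-\varepsilon_2/\theta]\ge P[C]\ge 1-\varepsilon_1/(1-\theta)$. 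To finish, I would choose $\theta$ with $\varepsilon_2/\tilde\varepsilon_2\le\theta\le 1-\varepsilon_1/\tilde\varepsilon_1$; this interval is nonempty because $2^{1+\alpha}\max(\varepsilon_1,\varepsilon_2^\alpha)\le\tilde\varepsilon_1\tilde\varepsilon_2^\alpha$ together with $\tilde\varepsilon_1,\tilde\varepsilon_2<1$ forces $\varepsilon_1/\tilde\varepsilon_1\le 2^{-1-\alpha}$ and $\varepsilon_2/\tilde\varepsilon_2\le 2^{-1-1/\alpha}$, a sum that is strictly below $1$. For such $\theta$ the process $X_T$ satisfies $X_T\ge-\tilde\varepsilon_2$ and $P[X_T\ge 1-\tilde\varepsilon_2]\ge 1-\tilde\varepsilon_1$, i.e.\ $S$ admits an $(\tilde\varepsilon_1,\tilde\varepsilon_2)$-arbitrage up to time $T$.

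I expect the uniformization step to be the real obstacle: the hypothesis only hands us an exceptional set for each measure $Q$ separately, and trading the quantifiers ``for all $Q$'' and ``there exists a claim'' is exactly where the minimax theorem is needed, so one must verify its hypotheses carefully here --- convexity of $\mathbf{M}^{T,e}_m$ and of $\mathcal{F}$, and weak-$*$ compactness of the order interval $\{0\le f\le 1\}$ in $L^\infty(\F_T)$. Everything else --- the thresholding, the appeal to the superhedging theorem (legitimate once Assumption~\ref{assAEAgenc} secures $\mathbf{M}^{T,e}_m\neq\emptyset$), and the elementary arithmetic that yields the factor $2^{1+\alpha}$ --- should be routine.
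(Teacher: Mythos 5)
Your proof is correct. Note, however, that the paper itself gives no proof of this lemma --- it is stated as a direct consequence of Proposition~2.3 (and Remark~2.4) of F\"ollmer and Schachermayer \cite{MR2403770}, specifically the chain $(\mathrm{ii})\Rightarrow(\mathrm{iii})\Rightarrow(\mathrm{i})$ there --- so there is nothing in the paper to compare against line by line. What you have done is to reconstruct precisely that cited chain: the Sion/minimax collapse of the per-$Q$ sets $A^Q_T$ into a single randomized test $f^\star$ (the $(\mathrm{ii})\Rightarrow(\mathrm{iii})$ step), thresholding $f^\star$ to recover a set $C$, and then the superhedging/optional-decomposition step producing $X_T\in\mathbf{K}^T$ (the $(\mathrm{iii})\Rightarrow(\mathrm{i})$ step), with the final arithmetic showing $\varepsilon_1/\tilde\varepsilon_1+\varepsilon_2/\tilde\varepsilon_2\le 2^{-1-\alpha}+2^{-1-1/\alpha}<1$ so that a valid threshold $\theta$ exists. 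All the hypotheses you flag (convexity and weak-$*$ compactness of the order interval intersected with $\{E_P[f]\ge 1-\varepsilon_1\}$, convexity of $\mathbf{M}^{T,e}_m$, applicability of the optional decomposition theorem under $\mathbf{M}^{T,e}_m\neq\emptyset$ and local boundedness of the continuous $S$) do hold in the paper's setting, so the argument is sound and in substance the same as the one the paper outsources to \cite{MR2403770}.
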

Note that $\varepsilon_1, \varepsilon_2$ in the assumption of Lemma \ref{trickaea} are exogenously given and unrelated to $T$.
 The point of the next result is that it allows us to choose them both exponentially small in $T$, if $S$ satisfies the extra condition
of a large deviations estimate. This is the key for subsequently proving our main result.
\begin{proposition}\label{foschaconjL}
Under Assumption \ref{assAEAgenc}, suppose that a market price of risk
$\lambda$ for the price process $S$ satisfies a large deviations estimate.
Then there exist constants $\tilde{C}, \gamma_1, \gamma_2>0$ and $1\leq
T_0<\infty$ such that for all $T \geq T_0$, we can find for any $Q \in
\mathbf{M}^{T,e}_m$ a set $A^Q_T \in \F_T$ with
\begin{equation*}
P[A^Q_T]\leq \tilde{C} e^{-\gamma_1 T}<1 \ \ \ \  \textnormal{and}  \ \ \ \ Q[A^Q_T] \geq 1-e^{-\gamma_2 T} .
\end{equation*}
\end{proposition}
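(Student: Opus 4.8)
The plan is to take, with $c_1,c_2>0$ the constants from the large deviations estimate and $\gamma_1>0$ a small constant to be fixed at the end,
\[
A^Q_T \;:=\; \{\, Z^Q_T \geq e^{\gamma_1 T} \,\} \;\cup\; \{\, K_T < c_1 T \,\} \;\in\; \F_T .
\]
With this choice the bound on $P[A^Q_T]$ is essentially immediate: $Z^Q$ is a nonnegative $P$-martingale with $E_P[Z^Q_T]=1$, so Markov's inequality gives $P[Z^Q_T\geq e^{\gamma_1 T}]\leq e^{-\gamma_1 T}$, while $\{K_T<c_1 T\}\subseteq\{\tfrac1T K_T\leq c_1\}$ and the large deviations estimate give $P[K_T<c_1 T]\leq e^{-c_2 T}$ for all $T$ beyond some $T_1$; taking $\gamma_1\leq c_2$ yields $P[A^Q_T]\leq 2e^{-\gamma_1 T}$, which is $<1$ for $T$ large (so $\tilde C=2$). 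The point of adjoining $\{K_T<c_1 T\}$ is exactly that the large deviations estimate controls its $P$-probability, while it is removed from $(A^Q_T)^c=\{Z^Q_T<e^{\gamma_1 T}\}\cap\{K_T\geq c_1 T\}$, so that on $(A^Q_T)^c$ we have the quantitative lower bound $\langle L^Q\rangle_T\geq K_T\geq c_1 T$ available. This is the one delicate choice in the argument: for the naive candidate $\{Z^Q_T\geq e^{\gamma_1 T}\}$ the complement would force us to estimate $Q[K_T<c_1 T]$, which the large deviations estimate, being a statement about $P$, does not provide and which is not otherwise accessible for an arbitrary $Q\in\mathbf{M}^{T,e}_m$.

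The substance is therefore to show $Q[(A^Q_T)^c]\leq e^{-\gamma_2 T}$, and here I would pass to $Q$ via Girsanov. Write $Z^Q=Z^Q_0\,\mathcal E(L^Q)$ as in Lemma~\ref{AEAchardensgenc}; we may assume $Z^Q_0=1$, the general case following by first discarding the set $\{Z^Q_0<e^{-\gamma_1 T}\}$, which carries $Q$-mass $E_P[Z^Q_0\,1_{\{Z^Q_0<e^{-\gamma_1 T}\}}]\leq e^{-\gamma_1 T}$, and then enlarging the constants slightly. Since the density process of $Q$ is $\mathcal E(L^Q)$, Girsanov's theorem makes $\widehat L:=L^Q-\langle L^Q\rangle$ a continuous local $Q$-martingale, null at $0$, with $\langle\widehat L\rangle=\langle L^Q\rangle\geq K$ by (\ref{maegeaea2}), and there is the identity
\[
\log Z^Q_T \;=\; L^Q_T-\tfrac12\langle L^Q\rangle_T \;=\; \widehat L_T+\tfrac12\langle L^Q\rangle_T .
\]
Consequently, on $(A^Q_T)^c$ we have $v:=\langle L^Q\rangle_T\geq c_1 T$ together with $\widehat L_T+\tfrac12 v<\gamma_1 T$, and it remains only to see that this configuration has exponentially small $Q$-probability.

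This last step is where the real work is, because $v=\langle L^Q\rangle_T$ has no a priori upper bound, so the fluctuation of $\widehat L$ over the random horizon $[0,v]$ is not directly amenable to a Gaussian tail estimate. I would handle this with the Dambis--Dubins--Schwarz time change: writing $\widehat L$ as a time-changed $Q$-Brownian motion, $\widehat L_t=\beta_{\langle\widehat L\rangle_t}$ (extending $\beta$ to all of $\R_+$ on an enlarged space if needed), one gets $\log Z^Q_T=\beta_v+\tfrac12 v$, and since $v\geq c_1 T$ on $(A^Q_T)^c$,
\[
(A^Q_T)^c\;\subseteq\;\Big\{\,\inf_{s\geq c_1 T}\big(\beta_s+\tfrac12 s\big)<\gamma_1 T\,\Big\}.
\]
By the strong Markov property of $\beta$ at the deterministic time $c_1 T$, the infimum on the right equals $\beta_{c_1 T}+\tfrac12 c_1 T+I$, where $I:=\inf_{u\geq0}(\beta'_u+\tfrac12 u)$ for a Brownian motion $\beta'$; since $\mathcal E(-\beta')_u=\exp(-\beta'_u-\tfrac12 u)$ is a nonnegative $Q$-martingale equal to $1$ at $0$ and converging to $0$, one has $Q[-I\geq a]=e^{-a}$ for all $a\geq0$.

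Fixing $\gamma_1:=\min(c_2,\,c_1/4)$, the event above forces $\beta_{c_1 T}+I<-\tfrac{c_1}{4}T$, whence $\beta_{c_1 T}<-\tfrac{c_1}{8}T$ or $I<-\tfrac{c_1}{8}T$; a Gaussian tail bound for the former and the exponential law of $-I$ for the latter give $Q[(A^Q_T)^c]\leq e^{-\gamma_2 T}$ for a suitable $\gamma_2>0$ (depending only on $c_1$) and all $T$ beyond some $T_2$. Taking $T_0:=\max(T_1,T_2)$, enlarged if necessary so that $\tilde Ce^{-\gamma_1 T}<1$ for $T\geq T_0$, then completes the proof; all constants depend only on $c_1,c_2$, so the choice is uniform over $Q\in\mathbf{M}^{T,e}_m$, as required. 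I expect the design of $A^Q_T$ --- making the $P$-large-deviations estimate bear on a $Q$-probability --- together with the time-change step handling the unbounded $\langle L^Q\rangle_T$ to be the only non-routine points.
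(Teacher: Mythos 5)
Your proof is correct, but it organizes the work on the opposite side of the density from the paper's. The paper's set is $A^Q_T := \{Z^Q_{\tau^Q} > e^{-\delta T}\}$ for a stopping time $\tau^Q = \tau^Q_{c_1 T}\wedge T$ built from the Dambis--Dubins--Schwarz time change of (an extension of) $L^Q$ \emph{under} $P$; with that choice the $Q$-bound is essentially free, $Q[(A^Q_T)^c] = E_P[Z^Q_{\tau^Q}1_{(A^Q_T)^c}]\leq e^{-\delta T}$ by optional stopping, and all the analytic effort (Gaussian tail of $B^Q_{c_1 T}$, the large-deviations bound on $\{K_T\leq c_1 T\}$) goes into controlling $P[A^Q_T]$. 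You instead take $A^Q_T := \{Z^Q_T\geq e^{\gamma_1 T}\}\cup\{K_T<c_1 T\}$, which makes the $P$-bound trivial (Markov on the $P$-martingale $Z^Q$ plus the large-deviations estimate), and push the hard work to the $Q$-side: Girsanov to make $L^Q-\langle L^Q\rangle$ a $Q$-local martingale, DDS under $Q$, strong Markov at $c_1 T$, and the exit law $Q[-I\geq a]=e^{-a}$ coming from the exponential martingale $\mathcal{E}(-\beta')$. Both approaches hinge on the same structural observation that the complement must be intersected with $\{K_T\geq c_1 T\}$ (resp.\ the paper's $G^Q_T$) to produce a quantitative lower bound on the bracket, and both deal with the random, unbounded $\langle L^Q\rangle_T$ by a time change to Brownian motion; but the paper never needs to pass to the measure $Q$ at all, whereas you never need the stopping-time/optional-stopping trick. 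Your route trades the need for Girsanov and a bit more Brownian-motion machinery (strong Markov, the all-time infimum law) for a much simpler $P$-estimate and a somewhat more transparent set $A^Q_T$; the constants are cleaner too ($\tilde C = 2$). Both are valid; they are genuinely different decompositions of the same underlying estimate.
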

\begin{proof}
By assumption, there exist constants $c_1,c_2 >0$ such that as in (\ref{large6a}),
\begin{equation*}
\limsup_{T\to \infty} \frac{1}{T} \log P\bigg[ \frac{1}{T} \ K_T \leq c_1  \bigg]<-c_2 .
\end{equation*}
We take any constant $0<\delta<\frac{c_1}{2}$ and set
\begin{equation}\label{AEAkaivT1}
\gamma_1:= \min\bigg\{\frac{(c_1-2\delta)^2}{8c_1}, \frac{c_2}{2} \bigg \}>0 , \ \ \ \ \ \gamma_2:= \delta>0 , \ \ \ \ \
\tilde{C}:= \frac{\sqrt{2 c_1}}{(c_1-2\delta) \sqrt{\pi}} + 1 >0 .
\end{equation}
By the definition of $\limsup_{T\to \infty}$, we find $1\leq T_0<\infty$ such that for all $T \geq T_0$,
\begin{equation}\label{foschaconjLN1neu3}
\tilde{C} e^{-\gamma_1 T}<1 \ \ \ \ \mbox{and} \ \ \ \   
P\big[K_T \leq c_1 T \big] \leq e^{-\frac{c_2}{2} T} . 
\end{equation}
Fix $T \geq T_0$ and $Q \in \mathbf{M}^{T,e}_m$. For any stopping time
$\sigma\leq T$, Lemma \ref{AEAchardensgenc} gives that
\begin{equation}\label{foschaconjLN2}
\frac{dQ}{dP}\bigg|_{\F_\sigma}=Z^Q_\sigma=\exp\Big(L^Q_\sigma -\frac{1}{2} \big\langle L^Q \big\rangle_\sigma \Big) \ \ \ \mbox{with} \ \ \
\langle L^Q \rangle_\sigma \geq K_{\sigma} .
\end{equation}
We define the set $G_T^Q:=\{\langle L^Q \rangle_T > c_1 T \big\}$. Then (\ref{foschaconjLN1neu3}) and (\ref{foschaconjLN2}) imply that
\begin{equation}\label{AEAkaivT2}
 P\big[\big(G_T^Q\big)^{c}\big] = P\big[\big\langle L^Q \big\rangle_T \leq c_1 T\big]\leq P\big[ K_T \leq c_1 T \big] \leq e^{-\frac{c_2}{2} T} .
\end{equation}
Now, Lemma \ref{AEAkail1} yields a continuous local martingale $\bar{L}^Q=(\bar{L}^Q_t)_{t\geq 0}$ with $\bar{L}^Q = L^Q$ on $[\![0,T]\!]$ and $\langle \bar{L}^Q  \rangle_{\infty}=\infty$ $P$-a.s. We define the stopping times
\begin{equation*}
\tau^Q_t:= \inf\big\{s>0 \ \big| \ \langle \bar{L}^Q  \rangle_s > t \big\}
\end{equation*}
for any $t\geq 0$ and the process $B^Q=(B^Q_t)_{t\geq 0}$ by
\begin{equation} \label{AEAkaivT3}
B^Q_t:= \bar{L}^Q_{\tau^Q_t}.
\end{equation}
Then the Dambis--Dubins--Schwarz theorem (see Theorem 3.4.6 in \cite{KAR00}) implies that $B^Q$ is a Brownian motion.
Set
$\tau^Q:= \tau^Q_{c_1 T} \wedge T$ .
By definition, $\tau^Q$ is a stopping time with respect to $\mathbb{F}$ and values in $[0,T]$. Moreover, as
$\bar{L}^Q = L^Q$ on $[\![0,T]\!]$ and $\langle \bar{L}^Q \rangle$ is continuous, we obtain that
\begin{equation}\label{AEAkaivT4}
G_T^Q=\big\{ \big\langle \bar{L}^Q \big\rangle_T>c_1 T \big\} \subseteq \big\{ \tau^Q_{c_1 T}<T \big\} = \big\{ \tau^Q_{c_1 T} =\tau^Q<T \big\} .
\end{equation}
We also note that for any standard normal random variable $U$, we have the estimate
\begin{equation}\label{AEAkail2}
P[U> ab] \leq \frac{1}{\sqrt{2\pi}\, a} \ e^{-\frac{1}{2} a^2 b^2}
\end{equation}
for any $a>0$, $b\geq 1$.
For the set $A^Q_T:= \{Z^Q_{\tau^Q} > e^{-\delta T} \} \in \F_{\tau^Q}\subseteq \F_T$, (\ref{AEAkaivT4}), (\ref{foschaconjLN2}), Lemma \ref{AEAkail1}, (\ref {AEAkaivT3}), (\ref{AEAkail2}) and (\ref{AEAkaivT2}) then yield
\begin{align*}
P\big[A^Q_T\big]
&= P\Big[\big\{Z^Q_{\tau^Q}> e^{-\delta T} \big\} \cap G^Q_T \Big] + P\Big[\big\{Z_{\tau^Q}^Q> e^{-\delta T} \big\} \cap \big(G^Q_T\big)^{c} \Big]\\
&\leq P\Big[Z^Q_{\tau^Q_{c_1 T}}> e^{-\delta T}, \ \ \tau^Q_{c_1 T}=\tau^Q<T \Big]
+ P\Big[\big(G^Q_T\big)^{c} \Big]\\
&= P\Big[L^Q_{\tau^Q_{c_1 T}} - \frac{1}{2} \langle L^Q \rangle_{\tau^Q_{c_1 T}} > -\delta T, \ \ \tau^Q_{c_1 T}=\tau^Q<T \Big]
+ P\Big[\big(G^Q_T\big)^{c} \Big]\\
&= P\Big[\bar{L}^Q_{\tau^Q_{c_1 T}} - \frac{1}{2} \langle \bar{L}^Q \rangle_{\tau^Q_{c_1 T}} > -\delta T, \ \ \tau^Q_{c_1 T}=\tau^Q<T \Big]
+ P\Big[\big(G^Q_T\big)^{c} \Big]\\
&\leq P\Big[ B_{c_1 T}^Q -\frac{1}{2} c_1 T > -\delta T \Big] + P\Big[\big(G^Q_T \big)^{c} \Big]\\
&= P\Big[B_1^Q > \frac{c_1 -2\delta}{2\sqrt{c_1}} \sqrt{T}\Big] + P\Big[\big(G^Q_T\big)^{c} \Big]\\
&\leq \frac{2 \sqrt{c_1}}{(c_1 -2\delta) \sqrt{2\pi}} \
\exp \bigg(- \frac{(c_1-2\delta)^2}{8 c_1} T \bigg) + \exp\bigg(- \frac{c_2}{2} T \bigg).
\end{align*}
Combining this with (\ref{AEAkaivT1}) and (\ref{foschaconjLN1neu3}) gives
\begin{equation*}
P\big[A^Q_T\big] \leq \tilde{C} e^{-\gamma_1 T} < 1 .
\end{equation*}
Moreover, we deduce from the definition of $A^Q_T$ and as $\delta=\gamma_2$ that
\begin{equation*}
Q\big[A^Q_T\big]= 1-Q\big[\big(A^Q_T\big)^c\big]=1-E\big[Z^Q_{\tau^Q} 1_{(A^Q_T)^c}\big]\geq 1-e^{-\gamma_2 T}.
\end{equation*}
\end{proof}

Thanks to the quantitative strengthening achieved in Proposition \ref{foschaconjL}, we are now able to prove the announced result.
\begin{theorem}\label{foschaconjT}
Under Assumption \ref{assAEAgenc}, suppose that a market price of risk $\lambda$ for the price process $S$ satisfies a large deviations estimate. Then $S$  allows asymptotic exponential arbitrage with exponentially decaying failure probability.
\end{theorem}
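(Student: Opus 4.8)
The plan is to combine the two preparatory results already established: feed the conclusion of Proposition~\ref{foschaconjL} into Lemma~\ref{trickaea}, and then rescale the resulting payoff using that $\mathbf{K}^T$ is a cone. Concretely, Proposition~\ref{foschaconjL} provides constants $\tilde C,\gamma_1,\gamma_2>0$ and $1\le T_0<\infty$ such that for every $T\ge T_0$ and every $Q\in\mathbf{M}^{T,e}_m$ there is $A^Q_T\in\F_T$ with $P[A^Q_T]\le\tilde C e^{-\gamma_1 T}<1$ and $Q[A^Q_T]\ge 1-e^{-\gamma_2 T}$. Hence the hypothesis of Lemma~\ref{trickaea} is satisfied for each $T\ge T_0$ with $\varepsilon_1:=\tilde C e^{-\gamma_1 T}\in(0,1)$ and $\varepsilon_2:=e^{-\gamma_2 T}\in(0,1)$.

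Next I would fix a single rate $0<\gamma<\tfrac12\min\{\gamma_1,\gamma_2\}$ and apply Lemma~\ref{trickaea} with $\alpha=1$ and $\tilde\varepsilon_1=\tilde\varepsilon_2=e^{-\gamma T}$. Since $\gamma_1-2\gamma>0$ and $\gamma_2-2\gamma>0$, there is $T_1\ge T_0$ with $4\max(\tilde C e^{-\gamma_1 T},e^{-\gamma_2 T})\le e^{-2\gamma T}$ for all $T\ge T_1$, i.e.\ $2^{1+\alpha}\max(\varepsilon_1,\varepsilon_2^{\alpha})\le\tilde\varepsilon_1\tilde\varepsilon_2^{\alpha}$. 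Lemma~\ref{trickaea} then produces, for each $T\ge T_1$, some $X_T\in\mathbf{K}^T$ with $X_T\ge-e^{-\gamma T}$ $P$-a.s.\ and $P[X_T\ge 1-e^{-\gamma T}]\ge 1-e^{-\gamma T}$.

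Finally I would pass to $Y_T:=e^{\gamma T/2}X_T$, which again lies in $\mathbf{K}^T$ because that set is a cone. Then $Y_T\ge-e^{-\gamma T/2}$ $P$-a.s., and on the event $\{X_T\ge 1-e^{-\gamma T}\}$ one has $Y_T\ge e^{\gamma T/2}(1-e^{-\gamma T})\ge e^{\gamma T/4}$ as soon as $T$ is large enough that $1-e^{-\gamma T}\ge\tfrac12$ and $e^{\gamma T/4}\ge2$; so for such $T$ we get $P[Y_T<e^{\gamma T/4}]\le e^{-\gamma T}$. Setting $\gamma_1':=\gamma/8$, $\gamma_2':=\gamma$, $C:=1$ and $\tilde T$ equal to the largest of the finitely many lower bounds on $T$ used above, we obtain for every $T\ge\tilde T$ that $Y_T\ge-e^{-\gamma T/2}\ge-e^{-\gamma_1' T}$ and $P[Y_T\le e^{\gamma_1' T}]\le P[Y_T<e^{\gamma T/4}]\le C e^{-\gamma_2' T}$; since $Y_T\in\mathbf{K}^T$, this is exactly the assertion that $S$ allows asymptotic exponential arbitrage with exponentially decaying failure probability.

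The step requiring the most care is purely bookkeeping: all the probabilistic substance is already contained in Proposition~\ref{foschaconjL}, and the only subtlety is to make conditions (a) and (b) of the definition hold with the \emph{same} rate $\gamma_1'$ while controlling the non-strict event $\{Y_T\le e^{\gamma_1' T}\}$ — this is what the harmless rescaling factor $e^{\gamma T/2}$ and the gap between the exponents $\gamma T/8$ and $\gamma T/4$ achieve. I do not expect any further obstacle.
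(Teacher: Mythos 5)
Your proof is correct and follows essentially the same route as the paper's: apply Proposition~\ref{foschaconjL} to satisfy the hypothesis of Lemma~\ref{trickaea}, then rescale the resulting $(\tilde\varepsilon_1,\tilde\varepsilon_2)$-arbitrage by an exponential factor using that $\mathbf{K}^T$ is a cone. The only difference is the bookkeeping in invoking Lemma~\ref{trickaea}: the paper works with a $T$-dependent exponent $\alpha_T=(\log\tilde C-\gamma_1 T)/(-\gamma_2 T)$ and takes $\tilde\varepsilon_{i,T}$ to be (up to a constant) square roots of $\varepsilon_{i,T}$, whereas you simply fix $\alpha=1$ and choose a single rate $\gamma<\tfrac12\min\{\gamma_1,\gamma_2\}$ so that $4\max(\varepsilon_1,\varepsilon_2)\le e^{-2\gamma T}$ eventually --- a slightly cleaner choice that gives the same conclusion.
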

\begin{proof}
By Proposition \ref{foschaconjL}, there exist $1\leq T_0<\infty$ and
constants $\tilde{C},\gamma_1, \gamma_2 >0$ such that for any $T \geq T_0$,
we can find for any $Q \in \mathbf{M}^{T,e}_m$ a set $A^Q_T \in \F_T$ with
\begin{equation*}
P[A^Q_T]\leq \tilde{C} e^{-\gamma_1 T}=: \varepsilon_{1,T} <1  \ \ \ \ \textnormal{and} \ \ \ \  Q[A^Q_T] \geq 1-e^{-\gamma_2 T}=:1-\varepsilon_{2,T} .
\end{equation*}
In particular, $\gamma_1 T>\log \tilde{C}$. For any $T \geq T_0$, we define
\begin{equation*}
\alpha_T:=\frac{\log \tilde{C}-\gamma_1 T}{-\gamma_2 T}>0 .
\end{equation*}
Thus $\alpha_T$ converges increasingly to $\frac{\gamma_1}{\gamma_2}$ as $T \to \infty$, and we have
\begin{equation}\label{foschaconjTN1}
\varepsilon_{1,T}=\varepsilon_{2,T}^{\alpha_T} .
\end{equation}
We take $T_0\leq \tilde{T}<\infty$ and a constant $\gamma_3$ with $0<2\gamma_3<\frac{\gamma_2}{2}$ and such that for any $T\geq \tilde{T}$,
\begin{equation}\label{foschaconjTN2}
e^{(\frac{\gamma_2}{2}-\gamma_3)T}-1 \geq e^{\gamma_3 T}\ \ \ \textnormal{and} \ \ \  2^{1+\frac{ \gamma_1}{\gamma_2}} \sqrt{\tilde{C} e^{-\gamma_1 T}}<1 .
\end{equation}
Now fix any $T\geq \tilde{T}$, set $\tilde{\varepsilon}_{1,T}:= 2^{1+\frac{\gamma_1}{\gamma_2}} \sqrt{\varepsilon_{1,T}}<1$ and $\tilde{\varepsilon}_{2,T}:=\sqrt{\varepsilon_{2,T}}<1$. By construction, due to (\ref{foschaconjTN1}), we have that
\begin{equation*}
\tilde{\varepsilon}_{1,T} \tilde{\varepsilon}_{2,T}^{\alpha_T}= 2^{1+\frac{\gamma_1}{\gamma_2}} \sqrt{\varepsilon_{1,T}} \sqrt{\varepsilon_{2,T}^{\alpha_T}}\geq 2^{1+\alpha_T} \sqrt{\varepsilon_{1,T}} \sqrt{\varepsilon_{2,T}^{\alpha_T}}=2^{1+\alpha_T} \max\big(\varepsilon_{1,T}, \varepsilon_{2,T}^{\alpha_T} \big) .
\end{equation*}
Therefore, we obtain from Lemma \ref{trickaea} that $S$ admits $(\tilde{\varepsilon}_{1,T}, \tilde{\varepsilon}_{2,T})$-arbitrage up to time $T$, which means that there is $\bar{X}_T \in \mathbf{K}^T$ such that\\
a) $\bar{X}_T \geq -e^{-\frac{\gamma_2}{2}T}$ \ $P$-a.s.\\
b) $P[\bar{X}_T \geq 1-e^{-\frac{\gamma_2}{2}T}]\geq 1-2^{1+\frac{\gamma_1}{\gamma_2}} \sqrt{\varepsilon_{1,T}}$ . \\ \\
We set $X_T:=e^{(\frac{\gamma_2}{2}-\gamma_3)T} \bar{X}_T \in \mathbf{K}^T$,
$\gamma_4:=\frac{\gamma_1}{2}>0$ and $C:=
2^{1+\frac{\gamma_1}{\gamma_2}}\sqrt{\tilde{C}}>0$. Due to the definition of
$X_T$ and (\ref{foschaconjTN2}), we obtain that
\begin{align*}
P[X_T \geq e^{\gamma_3 T}]&\geq P[X_T \geq e^{(\frac{\gamma_2}{2}-\gamma_3)T}-1]\\
&\geq P[X_T \geq e^{(\frac{\gamma_2}{2}-\gamma_3)T}-e^{-\gamma_3T}]\\
&= P[\bar{X}_T \geq 1-e^{-\frac{\gamma_2}{2}T}] .
\end{align*}
Thus, we conclude from the above properties of $\bar{X}_T$ and the definition of $\varepsilon_{1,T}$ that\\
a) $X_T \geq -e^{-\gamma_3 T}$ \ $P$-a.s.\\
b) $P[X_T \leq e^{\gamma_3 T}]\leq C e^{-\gamma_4 T}$ , \\
which proves the assertion.
\end{proof}
As a direct corollary, we can prove the conjecture in F{\"o}llmer and Schachermayer \cite{MR2403770}.
\begin{corollary}
Let $(\tilde{\Omega}, \tilde{\F},\tilde{\mathbb{F}},\tilde{P})$ be a filtered probability space
where the filtration $\tilde{\mathbb{F}}=(\tilde{\F}_t)_{t\geq 0}$ is the $\tilde{P}$-augmentation of the raw filtration generated by an
$\R^N$-valued Brownian motion $\tilde{W}$. Moreover, let $\tilde{S}$ be the diffusion process defined in (\ref{foschaN6}). Suppose that
the market price of risk function $\varphi(\cdot)$ satisfies a large deviations estimate with respect to $\tilde{S}$. Then $\tilde{S}$
allows asymptotic exponential arbitrage with exponentially decaying failure probability.
\end{corollary}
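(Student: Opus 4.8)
\section*{Proof proposal for the Corollary}

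The plan is to show that the Corollary is the special case of Theorem~\ref{foschaconjT} obtained by rewriting the diffusion model (\ref{foschaN6}) in the abstract semimartingale language. First I would check Assumption~\ref{assAEAgenc} for $\tilde{S}$. Since $\tilde{\mathbb{F}}$ is the $\tilde{P}$-augmentation of a Brownian filtration, the martingale representation theorem shows that every $\tilde{\mathbb{F}}$-local martingale is continuous, so $\tilde{\mathbb{F}}$ is continuous. For $\mathbf{M}^{T,e}_m \neq \emptyset$, note that $\tilde{Z}$ from (\ref{AEAfoschamartass}) is by hypothesis a strictly positive $\tilde{P}$-martingale, so for each $T<\infty$ the measure $\tilde{Q}$ with $d\tilde{Q}/d\tilde{P}|_{\tilde{\F}_T} = \tilde{Z}_T$ is a probability measure equivalent to $\tilde{P}|_{\tilde{\F}_T}$; by Girsanov's theorem $\tilde{W}_\cdot + \int_0^\cdot \varphi(\tilde{S}_s)\,ds$ is a $\tilde{Q}$-Brownian motion, hence $d\tilde{S}_t = \sigma(\tilde{S}_t)\,d\big(\tilde{W}_t + \varphi(\tilde{S}_t)\,dt\big)$ exhibits $\tilde{S}$ as a local $\tilde{Q}$-martingale, so that $\tilde{Q}|_{\tilde{\F}_T} \in \mathbf{M}^{T,e}_m$.

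Next I would identify the canonical decomposition $\tilde{S} = \tilde{S}_0 + M + A$ with $M_\cdot = \int_0^\cdot \sigma(\tilde{S}_s)\,d\tilde{W}_s$ and $A_\cdot = \int_0^\cdot \sigma(\tilde{S}_s)\varphi(\tilde{S}_s)\,ds$, so that $\langle M\rangle_\cdot = \int_0^\cdot \sigma(\tilde{S}_s)\sigma(\tilde{S}_s)^{\mathrm{tr}}\,ds$, and then exhibit a market price of risk $\lambda$ in the sense of Lemma~\ref{AEAchardensgenc}. Using the standing requirement $\varphi(\tilde{S}_t) \in (\ker\sigma(\tilde{S}_t))^{\perp} = \mathrm{range}\big(\sigma(\tilde{S}_t)^{\mathrm{tr}}\big)$, I would set $\lambda_t := \big(\sigma(\tilde{S}_t)^{\mathrm{tr}}\big)^{+}\varphi(\tilde{S}_t)$ with $(\cdot)^{+}$ the Moore--Penrose pseudoinverse; since $B \mapsto B^{+}$ is Borel, $\lambda$ inherits the needed measurability from $\tilde{S}$, $\sigma$ and $\varphi$, and $\sigma(\tilde{S}_t)^{\mathrm{tr}}\lambda_t = \varphi(\tilde{S}_t)$. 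Hence $-\int\lambda\,dM = -\int \lambda^{\mathrm{tr}}\sigma(\tilde{S})\,d\tilde{W} = -\int\varphi(\tilde{S})\,d\tilde{W}$, so $\mathcal{E}\big(-\int\lambda\,dM\big) = \tilde{Z}$; comparing with (\ref{maegeaea1}) and using that $\int\lambda\,dM$ is determined by the density processes, this identifies $\lambda$ as a market price of risk for $\tilde{S}$ (with the orthogonal part $N^{\tilde{Q}}$ vanishing for $\tilde{Q}$). Since $\int_0^t \lambda_s^{\mathrm{tr}}\,d\langle M\rangle_s\,\lambda_s = \int_0^t \Vert\sigma(\tilde{S}_s)^{\mathrm{tr}}\lambda_s\Vert^2\,ds = \int_0^t \Vert\varphi(\tilde{S}_s)\Vert^2\,ds < \infty$ $\tilde{P}$-a.s.\ (the finiteness being implicit in $\tilde{Z}$ being well defined), we also get $\lambda \in L^2_{\mathrm{loc}}(M)$.

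It then follows that the mean--variance tradeoff process of $\tilde{S}$ is $K_t = \int_0^t \Vert\varphi(\tilde{S}_s)\Vert^2\,ds$, so that the large deviations estimate (\ref{large6aa}) for $\varphi$ with respect to $\tilde{S}$ is exactly the large deviations estimate (\ref{large6a}) for $\lambda$ (reading $P$ in (\ref{large6aa}) as $\tilde{P}$); by the remark following Lemma~\ref{AEAchardensgenc}, this property is independent of the choice of $\lambda$. Applying Theorem~\ref{foschaconjT} to $\tilde{S}$ then gives the claim. The only step that is not pure bookkeeping is the measurable construction of $\lambda$ out of $\varphi$ and $\sigma$ from the range condition on $\varphi$, which is precisely what the pseudoinverse delivers; I expect this to be the main --- and rather mild --- obstacle.
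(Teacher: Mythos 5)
Your proof is correct and takes a somewhat more explicit route than the paper. The paper's proof establishes Assumption~\ref{assAEAgenc} via the martingale representation theorem, then quotes the classical description of the density processes of equivalent martingale measures for the diffusion $\tilde{S}$ (with integrand $\psi^{\tilde Q}$ satisfying $\psi^{\tilde Q}_t-\varphi(\tilde S_t)\in\ker\sigma(\tilde S_t)$) to obtain the bound $\langle\tilde L^Q\rangle_t\geq\int_0^t\|\varphi(\tilde S_s)\|^2\,ds$, and then says, in effect, ``now re-run the computations of Proposition~\ref{foschaconjL} and Theorem~\ref{foschaconjT} with $L^Q$ and $K$ replaced by $\tilde L^Q$ and $\int\|\varphi(\tilde S_s)\|^2\,ds$.'' You instead directly verify the hypotheses of Theorem~\ref{foschaconjT} as stated: you exhibit the canonical decomposition of $\tilde S$, build a concrete market price of risk $\lambda_t=(\sigma(\tilde S_t)^{\mathrm{tr}})^{+}\varphi(\tilde S_t)$ via the Moore--Penrose pseudoinverse (using the range condition $\varphi(\tilde S_t)\in(\ker\sigma(\tilde S_t))^{\perp}$), check that $-\int\lambda\,dM=-\int\varphi(\tilde S)\,d\tilde W$ so that $\lambda$ is a market price of risk in the sense of Lemma~\ref{AEAchardensgenc}, and compute $K_t=\int_0^t\|\varphi(\tilde S_s)\|^2\,ds$, so that (\ref{large6aa}) for $\varphi$ becomes literally (\ref{large6a}) for $\lambda$. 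This makes the Corollary a genuine special case of the Theorem rather than a parallel re-derivation; the small price is that you must address the measurability of $\lambda$ (handled correctly by Borel-ness of $B\mapsto B^{+}$) and its local square-integrability against $\langle M\rangle$ (handled by the assumed well-posedness of $\tilde Z$). Both routes are sound; yours is cleaner as an application of the Theorem, while the paper's avoids constructing $\lambda$ at all by working directly with the $\tilde L^Q$ and the inequality $\langle\tilde L^Q\rangle\geq\int\|\varphi(\tilde S)\|^2\,ds$.
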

\begin{proof}
By our assumption (\ref{AEAfoschamartass}) on the diffusion process in (\ref{foschaN6}) and the choice
 of the filtration $\tilde{\mathbb{F}}$, the martingale representation theorem implies
 Assumption \ref{assAEAgenc} for $\tilde{P}$. Moreover, it is well known that
for every $T<\infty$ and any equivalent martingale measure $\tilde{Q}$ for $(\tilde{S}_t)_{0\leq t \leq T}$, the density process $\tilde{Z}^{\tilde{Q}}$ with respect to $\tilde{P}|_{\tilde{\F}_T}$ is of the form
\begin{equation*}
 \tilde{Z}^{\tilde{Q}}= \mathcal{E}\bigg(\int -\psi^{\tilde{Q}}\, d\tilde{W} \bigg)=:
 \mathcal{E}\Big(\tilde{L}^Q\Big) ,
\end{equation*}
 where $(\psi_t^{\tilde{Q}})_{0\leq t \leq T}$ is a predictable $\R^N$-valued process with $\psi_t^{\tilde{Q}}-\varphi(\tilde{S}_t) \in$
 ker$(\sigma(\tilde{S}_t))$ for any $t \in [0,T]$. As a consequence, we have
\begin{equation*}
\big\langle \tilde{L}^Q \big \rangle_t= \int_0^t \Vert \psi^{\tilde{Q}}_s \Vert^2\, ds \geq \int_0^t \Vert \varphi(\tilde{S}_s) \Vert^2\, ds
\end{equation*}
for any $t \in [0,T]$. For details, we refer to Section 3 of F{\"o}llmer and Schachermayer \cite{MR2403770}.
 Therefore, if we compare Definitions \ref{large6} and \ref{large6diff} and look at Lemma \ref{AEAchardensgenc},
 we see that we get the result directly by using the same computations as in Proposition \ref{foschaconjL} and Theorem \ref{foschaconjT}, replacing $L^Q$ by $\tilde{L}^Q$ and $K$ by $\int \Vert \varphi(\tilde{S}_s) \Vert^2 ds$.
\end{proof}
In F{\"o}llmer and Schachermayer \cite{MR2403770}, the authors considered the diffusion process $\tilde{S}$ defined in (\ref{foschaN6}) and
introduced a quantitative form of long-term arbitrage. This is almost the same as asymptotic exponential arbitrage with
exponentially decaying failure probability, with the difference that there is no relation between
the tolerance level of failure and the necessary time to reach a level.
The authors introduced the notion of \textit{having an average squared market price of risk above a threshold $c>0$},
which is a growth condition on the
mean-variance tradeoff process. They proved that if $\tilde{S}$ satisfies this, then there exists the above kind of
long-term arbitrage (see Theorem 1.4 in \cite{MR2403770}). Furthermore, the authors wrote that one should expect to have asymptotic
exponential arbitrage with exponentially decaying failure probability (in the sense of the present paper)
 under the stronger assumption that the market price of risk function $\varphi(\cdot)$ for $\tilde{S}$
satisfies a large deviations estimate. They even sketched an argument how one could try to prove this conjecture using a
 large deviations approach, but left the details and precise assumptions open.
In Mbele Bidima and R\'asonyi \cite{MbeleR}, the authors proved such a result in a discrete-time version of the model (\ref{foschaN6})
by using a large deviations estimate for a martingale difference sequence (see Theorem 4 in \cite{MbeleR}).
The main contribution of the present paper is a rigorous proof based on a time-change argument instead of a large deviations approach.
In addition to avoiding any extra assumptions, this has also allowed us to prove the result not only for diffusions, but for general
continuous semimartingales (satisfying Assumption \ref{assAEAgenc}).

\section*{Acknowledgements}
The authors wish to thank Martin Schweizer for his careful reading, remarks
and suggestions which have improved the presentation of this paper. Financial
support by the National Centre of Competence in Research "Financial Valuation
and Risk Management" (NCCR FINRISK), Project D1 (Mathematical Methods in
Financial Risk Management) is gratefully acknowledged. The NCCR FINRISK is a
research instrument of the Swiss National Science Foundation.

\addcontentsline{toc}{section}{References}

\providecommand{\bysame}{\leavevmode\hbox to3em{\hrulefill}\thinspace}
\providecommand{\MR}{\relax\ifhmode\unskip\space\fi MR }
\providecommand{\MRhref}[2]{%
  \href{http://www.ams.org/mathscinet-getitem?mr=#1}{#2}
} \providecommand{\href}[2]{#2}

\end{document}